\newtheorem{Le}{Lemma}
\newtheorem{Co}{Corollary}
\newtheorem{The}{Theorem}
\newtheorem{Rem}{Remark}
\newtheorem{Pro}{Proposition}
\theoremstyle{definition}
\newtheorem{De}{Definition}
\newcommand{\s}{\mathfrak{S}}
\newcommand{\card}{{{\mathrm card}}}
\def\maj{{\scriptstyle \mathsf{MAJ}}}
\def\inv{{\scriptstyle \mathsf{INV}}}
\def\stat{{\scriptstyle \mathsf{STAT}}}
\def\st{{\scriptstyle \mathsf{ST}}}
\title{Lehmer code transforms and Mahonian statistics on permutations}
\author{
Vincent {\sc Vajnovszki}\\ 
{\small LE2I, Universit\'e de Bourgogne}\\
{\small BP 47870, 21078 Dijon Cedex, France}\\
{\small \tt vvajnov@u-bourgogne.fr}
}
\begin{document}

\maketitle

\begin{abstract}
In 2000 Babson and Steingr{\'\i}msson introduced
the notion of vincular patterns in permutations. 
They shown that essentially all well-known Mahonian
permutation statistics can be written as 
combinations of such patterns.
Also, they proved and conjectured that other 
combinations of vincular patterns are still Mahonian.
These conjectures were proved later:
by Foata and Zeilberger in 2001, and 
by Foata and Randrianarivony in 2006.

In this paper we give an alternative proof 
of some of these results.
Our approach is based on permutation codes which, like Lehmer's code,
map bijectively permutations onto subexcedant sequences.
More precisely, we give several code transforms (i.e., bijections
between subexcedant sequences) which when applied to
Lehmer's code yield new permutation codes which count occurrences 
of some vincular patterns.

\end{abstract}

\section{Introduction}

An alternative way to represent a permutation $\pi=\pi_1\pi_2\cdots\pi_n\in\s_n$ 
is by its permutation code $t_1t_2\cdots t_n$, which is a subexcedant sequence.
A classical example of permutation code is the Lehmer code, where
each $t_i$ is the number of entries in $\pi$ larger than $\pi_i$
and on its left.
The $\inv$ statistic on permutations is related to Lehmer code by $\inv \,\pi=\sum_{i=1}^nt_i$.

A code transform is a bijection from subexcedant sequences onto itself.
We give several code transforms and show that 
most of pattern-involvement based statistics introduced in \cite{BabSteim} 
are related to transforms of Lehmer code in the same way as 
$\inv$ is related to Lehmer code.
These results are summarized in the table at the end of this paper

\subsection{Permutation patterns}
A permutation $\sigma\in\s_k$ is a classical pattern of the permutation 
$\pi\in\s_n$, $k\leq n$, if there is a
sequence $1\leq i_1<i_2<\cdots<i_k\leq n$ such that
$\pi_{i_1}\pi_{i_2}\cdots\pi_{i_k}$ is order-isomorphic to
$\sigma$. 
Vincular patterns, introduced in \cite{BabSteim}, are generalizations
of classical patterns where:
\begin{itemize}
\item Two adjacent letters may or may not be separated
by a dash.  The absence of a dash between two adjacent letters 
means that the corresponding letters in the permutation
must be adjacent;  
\item Patterns may begin and/or end with square brackets. 
These indicate that they are
required to begin at the first letter in a permutation and/or end at
the last letter.  
\end{itemize}

In the following patterns will be written as words over the alphabet
$\{a,b,c,\ldots\}$ based on the usual ordering
$a<b<c\cdots$.

\subsection{Mahonian statistics and pattern involvement}
A {\em statistic} on $\s_n$ is an association of an integer
to each permutation in $\s_n$. Classical examples of statistics are
$\inv$ and  $\maj$ defined as

\begin{itemize}
\item[]
$
\inv\, \pi =\text{card}\, \{(i,j)\ |\ 1\leq i<j\leq n, \pi_i>\pi_j\},
$
\item[]
$
\displaystyle 
\maj\, \pi = \mathop{\sum_{1\leq i <n}}_{\pi_i>\pi_{i+1}} i.
$
\end{itemize}

A statistic $\st$ on $\s_n$ is {\it Mahonian} if it has the same distribution
as $\inv$, that~is 

$$
{\mathrm  \card}\,\{\pi\in\s_n \,|\, \st\, \pi = k\}=
{\mathrm  \card}\,\{\pi\in\s_n \,|\, \inv\,  \pi = k\},
$$
for any $k\geq 0$, and it is well known that $\maj$ is a Mahonian statistic.

For a permutation $\pi$ and a set of patterns $\{\sigma,\tau,\ldots \}$,
we denote be $(\sigma+\tau+\cdots)\,\pi$ the number of occurrences of  
these patterns in $\pi$, and $(\sigma+\tau+\cdots)$ becomes 
a permutation statistic. For example
$$
\inv\,\pi =(a-b)\,\pi,
$$    
and 
$$
\maj\,\pi=((a-cb)+(b-ca)+(c-ba)+(ba))\,\pi,
$$
and both statistics are Mahonian.

\medskip

In order to count the number of occurrences of the pattern $\sigma$ in
$\pi$ we introduce the notion of 
{\it pointed pattern}.
A pointed pattern 
is the pattern $\sigma$ together with a privileged element,
say the $\ell$th one; and we denote by 
$\sigma_1\sigma_2\cdots\underline{\sigma_\ell}\cdots\sigma_k$
such a pattern.
Often, when the privileged element is understood or does 
no matter we denote simply by $\underline{\sigma}$ a pointed pattern
if the underlining permutation is $\sigma$.
With these notations,
$(\sigma_1\sigma_2\cdots\underline{\sigma_\ell}\cdots\sigma_k)_i\,\pi$
denotes the number of occurrences of the pattern 
$\sigma$ in the permutation $\pi$, where the role of $\sigma_{\ell}$ 
is played by $\pi_i$. 
For example,  if $\pi=2\,4\,5\,1\,3\,6$, then $(b-\underline{a}c)_5\,\pi=2$, and
$(b-a\underline{c})_5\,\pi=1$.
Clearly, 
 
$$
(\sigma)\,\pi=
\sum_{i=1}^n(\underline{\sigma})_i\,\pi
$$
for any pointed pattern $\underline{\sigma}$ corresponding to 
$\sigma$.

\subsection{Permutation codes}

An integer sequence $t_1t_2\cdots t_n$ is said to be 
{\em subexcedant} if $0\leq t_i\leq i-1$ for $1\leq i\leq n$, and the set of
all length-$n$ subexcedant sequences is denoted by $S_n$; so 
$S_n=
\{0\}\times \{0,1\}\times\cdots\times\{0,1,\ldots,n-1\}.
$
Clearly, $\s_n$ is in bijection with $S_n$, and any such
bijection is called {\it permutation code}.

The {\em Lehmer code} \cite{Lehmer} is a classical
example of permutation code and it will be our 
starting point for the construction of other several
permutation codes.
This code bijectively maps each permutation 
onto a subexcedant sequence of same length.

\begin{De}
For $\pi=\pi_1\pi_2\cdots\pi_n\in\s_n$, the Lehmar code $L(\pi)$
of $\pi$ is the subexcedant sequence $t_1t_2\cdots t_n$
where, for all $i$, $1\leq i\leq n$, $t_i$ is the number of inversions $(j,i)$ in $\pi$,
that is, number of $j$ with $\pi_j>\pi_i$ but $j<i$.
\end{De}

For example $L(5\,2\,1\,6\,4\,3)=0\,1\,2\,0\,2\,3$.
Some permutation codes can be obtained from 
pattern occurrences, and this is the case for the Lehmer code
$t_1t_2\cdots t_n$ of a permutation~$\pi$:
\begin{equation}
\label{lehmer1}
t_i= (b-\underline{a})_i\,\pi
\end{equation}
and so $\inv\,\pi=(b-a)\,\pi$.
Alternatively, 
\begin{equation}
\label{lehmer2}
t_i=
((b-c\underline{a})+(c-a\underline{b})+(c-b\underline{a})+(b\underline{a}) )_i\,\pi,
\end{equation}
and $\inv\,\pi=((b-ca)+(c-ab)+(c-ba)+(ba) )\,\pi$.

\medskip

Let $\{\underline{\sigma},\underline{\tau},\ldots\}$
be a set of pointed patterns. If the function 
$$
\pi\mapsto t_1t_2\cdots t_n
$$ 
where
$$
t_i=(\underline{\sigma}+\underline{\tau}\cdots)_i\,\pi,\ {\rm for}\ 1\leq i\leq n,
$$
is a permutation code, then we say that the set of patterns
$\{\sigma,\tau,\ldots\}$ induces a permutation code.
For example, relations (\ref{lehmer1}) and (\ref{lehmer2})
show that the Lehmer code $L$ is induced both by 
the set of patterns $\{b-a\}$
and $\{c-ba,b-ca,c-ab,ba\}$.

For each permutation code $\pi\mapsto t_1t_2\cdots t_n$
we can associate naturally a Mahonian statistic $\st$ on $\s_n$, defined by 
$$
\st\, \pi = \sum_{i=1}^n t_i.
$$
In addition, if the set of patterns $\{\sigma,\tau,\ldots\}$ induces a 
permutation code, then the statistic 
$$
\pi\mapsto (\sigma+\tau\cdots)\,\pi
$$ 
is Mahonian.
%
%
We will see that, like $\{b-a\}$ and $\{c-ba,b-ca,c-ab,ba\}$,
several other patterns induce permutation codes,
and so these patterns are Mahonian, see Table \ref{table}.

%
%
%

%

\section{Code transforms}

We call a bijection from  $S_n$ onto itself a {\it code transform}.
Below we give six functions and we show that they are code transforms,
and thus each of them applied to the Lehmer code yields still
a permutation code. 

\begin{De}
The functions  
$\Delta,\Gamma,\Theta,\Lambda,\Theta,\Upsilon,\Psi: S_n\rightarrow S_n$
are defined as follows. If $t=t_1t_2\cdots t_n\in S_n$, then 
\begin{itemize}
\item $\Delta(t)=s_1s_2\cdots s_n$ is defined by 
 $
  s_i=\left\{ \begin {array}{ccc}
  (t_{i}-t_{i+1}) \mod i    & {\rm if} &  1\leq i<n \\
  t_n         & {\rm if} & i=n. 
  \end {array}
  \right.
  $ 
\item $\Gamma(t)=s_1s_2\cdots s_n$ is defined by 
  $
  s_i=\left\{ \begin {array}{ccc}
   0         & {\rm if} & i=1\\
  (t_{i-1}-t_{i}) \mod i   & {\rm if} &  1<i\leq n.
  \end {array}
  \right.
  $
\item $\Theta(t)=s_1s_2\cdots s_n$ is defined by
  $
  s_i=\left\{ \begin {array}{ccc}
  0                & {\rm if} & i=1 \\
  t_{i-1}-t_{i}  & {\rm if} & t_{i-1}\geq t_i {\rm\ and\ } 1<i\leq n \\
  t_{i}            & {\rm if} & t_{i-1}<    t_i {\rm\ and\ } 1<i\leq n.
  \end {array}
  \right.
  $
\item $\Lambda(t)=s_1s_2\cdots s_n$ is defined by 
  $
  s_i=\left\{ \begin {array}{ccc}
  0                & {\rm if} & i=1 \\
  t_i  & {\rm if} & t_{i-1}\geq t_i {\rm\ and\ }  1<i\leq n \\
   i+t_{i-1}-t_{i}           & {\rm if} & t_{i-1}<    t_i {\rm\ and\ } 1<i\leq n.
  \end {array}
  \right.
  $
\item 
  $\Upsilon(t)=s_1s_2\cdots s_n$ is defined by
$
  s_i=\left\{ \begin {array}{ccc}
  i-t_i-1     & {\rm if} & t_i     <    t_{i+1} {\rm\ and\ } 1\leq i<n \\
  t_i-t_{i+1} & {\rm if} & t_i \geq t_{i+1}     {\rm\ and\ } 1\leq i<n\\
  t_n         & {\rm if} & i=n. 
\end {array}
\right.
$  
\item 
  $\Psi(t)=s_1s_2\cdots s_n$ is defined by
 $
s_i=\left\{ \begin {array}{ccc}
  t_{i+1}-t_i-1     & {\rm if} & t_i     <    t_{i+1} {\rm\ and\ } 1\leq i<n \\
  t_i & {\rm if} & t_i \geq t_{i+1}     {\rm\ and\ } 1\leq i<n  \\
  t_n         & {\rm if} & i=n. 
\end {array}
\right.
$  
\label{six_fun}
\end{itemize}

\end{De}

\noindent
See Figure \ref{fig_double}.a for several examples.

\begin{Pro}
The six functions given in Definition \ref{six_fun} are bijections.

\end{Pro}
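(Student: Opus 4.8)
The plan is to show each of the six functions is a bijection from the finite set $S_n$ onto itself. Since $S_n$ is finite, it suffices to exhibit, for each function, an explicit inverse — or equivalently to show the function is injective (surjectivity then follows by counting). I would opt for writing down explicit inverses, because the defining formulas are piecewise-linear with modular reductions, and inverting them case-by-case is mechanical once the structure is understood. The key observation driving every inversion is that each $s_i$ is computed from two consecutive entries of $t$ (namely $t_i$ and $t_{i+1}$, or $t_{i-1}$ and $t_i$), so the transforms have a one-step ``telescoping'' character: given the output $s$ and one pivot entry of $t$, the remaining entries of $t$ are forced sequentially.

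First I would handle $\Delta$ and $\Gamma$ together, since they are the modular pair. For $\Delta$, note $s_n = t_n$ is immediately recovered, and then for $i$ running downward from $n-1$ to $1$ the relation $s_i \equiv t_i - t_{i+1} \pmod i$ together with the constraint $0 \le t_i \le i-1$ determines $t_i$ uniquely as $t_i = (s_i + t_{i+1}) \bmod i$; the point is that reduction modulo $i$ is a bijection from $\{0,1,\ldots,i-1\}$ to itself once the shift $t_{i+1}$ is known. The map $\Gamma$ is the same idea run from the top: $t$ is recovered by fixing, say, $t_1 = 0$ and solving $t_i = (t_{i-1} - s_i) \bmod i$ upward — here one must check this stays subexcedant, which it does because the result is taken mod $i$. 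I would verify in each case that the recovered sequence is genuinely subexcedant, which is exactly what makes the inverse land back in $S_n$.

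Next I would treat the four piecewise (non-modular) maps $\Theta,\Lambda,\Upsilon,\Psi$. These are defined by a case split on whether $t_{i-1} \ge t_i$ (or $t_i \ge t_{i+1}$), so the inversion hinges on being able to read off, from the value of $s_i$ alone, which case occurred. The crux is that the two cases produce output values in \emph{disjoint ranges}: for instance in $\Upsilon$, the ``$t_i < t_{i+1}$'' branch outputs $i - t_i - 1$ while the ``$t_i \ge t_{i+1}$'' branch outputs $t_i - t_{i+1}$, and one checks these occupy complementary intervals of $\{0,\ldots,i-1\}$ so that the branch is recoverable from $s_i$ and the already-known pivot. Given the branch, each formula is linear in the unknown $t$-entry and inverts by a single subtraction. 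I would write the inverse for $\Upsilon$ and $\Psi$ (bottom-up from $s_n = t_n$) and for $\Theta$ and $\Lambda$ (top-down from $s_1$ forcing $t_1 = 0$), checking the subexcedant bounds throughout.

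The main obstacle I anticipate is precisely this range-disjointness verification for the piecewise maps: one must confirm that the two output intervals for each $s_i$ partition the admissible range with no overlap and no gap, so that the case distinction is unambiguous and every target value is hit. This is where a careless sign or off-by-one error would break bijectivity, and it is the only genuinely non-routine content. Once disjointness is established, injectivity of each transform follows (distinct inputs cannot collide, since the recovery is deterministic), and because $S_n$ is finite an injection of $S_n$ into itself is automatically a bijection; alternatively the explicit inverses establish bijectivity directly. I would therefore organize the proof as six short inversion arguments, grouping $\{\Delta,\Gamma\}$ (modular) and $\{\Theta,\Lambda,\Upsilon,\Psi\}$ (piecewise-disjoint), and in each group verify the subexcedant constraint is preserved.
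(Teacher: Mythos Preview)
Your proposal is correct and rests on the same observations as the paper's proof. The paper organizes matters slightly differently: rather than exhibiting inverses, it argues injectivity directly by taking two distinct inputs, locating the leftmost (for $\Theta,\Lambda$) or rightmost (for $\Upsilon,\Psi$) position where they differ, and checking the outputs differ there --- which is exactly your range-disjointness check phrased contrapositively --- while the explicit inverse formulas you propose are recorded afterward as a separate remark rather than as the proof.
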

\begin{proof}
It is enough to prove that each of these functions is injective,
cardinality reasons complete the proof. 
The injectivity of $\Delta$ and $\Gamma$ is routine.

In the next we consider two different sequences 
$t=t_1t_2\cdots t_n$ and $t'=t'_1t'_2\cdots t'_n$ in $S_n$.

\medskip
\noindent
{\it The injectivity of $\Theta$ and  $\Lambda$.}
Let $i$ be the leftmost position
where $t$ and $t'$ differ. 
The sequences $s_1s_2\cdots s_n=\Theta(t)$
and $s'_1s'_2\cdots s'_n=\Theta(t')$ differ also in position $i$.
Indeed, if $t_i\leq t_{i-1}=t'_{i-1}< t'_i$, then 
$s_i\leq t_{i-1}<s'_i$ and
so $s_i\neq s'_i$. The other cases are equivalent or trivial, and
the injectivity of $\Lambda$ is similar.

%
%

\medskip
\noindent
{\it The injectivity of $\Upsilon$.}
Let $i$ be the rightmost position
where $t$ and $t'$ differ. 
The sequences $s_1s_2\cdots s_n=\Upsilon(t)$
and $s'_1s'_2\cdots s'_n=\Upsilon(t')$ differ also in position $i$.
Indeed, if $i<n$ and $t_i\geq t_{i+1}=t'_{i+1}> t'_i$, then 
$s_i\leq i-1-t_{i+1}<s'_i$ and
so $s_i\neq s'_i$. The other cases are equivalent or trivial.

\medskip
\noindent
{\it The injectivity of $\Psi$.}
Let $i$ be the rightmost position where $t$ and $t'$ differ. 
The sequences $s_1s_2\cdots s_n=\Psi(t)$
and $s'_1s'_2\cdots s'_n=\Psi(t')$ differ also in position $i$.
Indeed, if $i<n$ and $t_i\geq t_{i+1}=t'_{i+1}> t'_i$, then $s_i\geq t_{i+1}>s'_i$ and
so $s_i\neq s'_i$. The other cases are equivalent or trivial.

\end{proof}

\noindent
It is easy to check the following.

\begin{Rem}
The inverse of $\Delta$, $\Gamma$, $\Theta$, $\Lambda$, $\Upsilon$ and 
$\Psi$ are given below. If $s=s_1s_2\cdots s_n\in S_n$, then

\begin{itemize}

\item $\Delta^{-1}(s)=t_1t_2\cdots t_n$ with 
$
t_i=\left\{ \begin {array}{ccc}
  s_n & {\rm if} &  i = n \\
  (t_{i+1}+s_i) \mod i         & {\rm if} &  1\leq i<n,
\end {array}
\right.
$ 
\item $\Gamma^{-1}(s)=t_1t_2\cdots t_n$ with 
$
t_i=\left\{ \begin {array}{ccc}
  s_1 & {\rm if} &  i = 1 \\
  (t_{i-1}-s_i) \mod i         & {\rm if} &  1< i\leq n. 
\end {array}
\right.
$

\item $\Theta^{-1}(s)=t_1t_2\cdots t_n$ with
  $
  t_i=\left\{ \begin {array}{ccc}
  0                & {\rm if} & i=1 \\
  s_{i}  & {\rm if} & t_{i-1}< s_i {\rm\ and\ } 1< i\leq n \\
  t_{i-1}-s_{i}            & {\rm if} & t_{i-1}\geq    s_i {\rm\ and\ } 1< i\leq n.
  \end {array}
  \right.
  $
  
\item $\Lambda^{-1}(s)=t_1t_2\cdots t_n$ with
  $
  t_i=\left\{ \begin {array}{ccc}
  0                & {\rm if} & i=1 \\
  i+t_{i-1}-s_{i}  & {\rm if} &  t_{i-1}<s_i  {\rm\ and\ } 1< i\leq n \\
  s_{i}            & {\rm if} &  t_{i-1}\geq s_i \ {\rm\ and\ }1< i\leq n.
  \end {array}
  \right.
  $
   
   \item $\Upsilon^{-1}(s)=t_1t_2\cdots t_n$ with
  $
  t_i=\left\{ \begin {array}{ccc}
  s_i                & {\rm if} & i=n \\
  t_{i+1}+s_i  & {\rm if} & t_{i+1}+s_i\leq i-1 {\rm\ and\ } 1\leq i< n \\
  i-1-s_{i}    & {\rm if} & t_{i+1}+s_i> i-1 {\rm\ and\ }  1\leq i< n.
  \end {array}
  \right.
  $
     
   \item $\Psi^{-1}(s)=t_1t_2\cdots t_n$ with
  $
  t_i=\left\{ \begin {array}{ccc}
  s_i                & {\rm if} & i=n \\
  s_i  & {\rm if} & s_i\geq t_{i+1} {\rm\ and\ } 1\leq i< n \\
  t_{i+1}-s_i-1 & {\rm if} & s_i< t_{i+1} {\rm\ and\ } 1\leq i< n.
  \end {array}
  \right.
  $
   
\end{itemize}

\end{Rem}

 
\section{Main results}

In this section we will prove the Mahonity of some patterns.
The results are stated in Theorems \ref{S_2}-\ref{third} and summarized
in Table \ref{table}; all of them are already known
\cite{BabSteim,Foata_Zeilberger,Foata_Randrianarivony}. 
The novelty consists in the unified approach
based on permutation codes, described briefly as:
\begin{itemize}
\item Find a convenient set of pointed patterns 
$\{\underline{\sigma},\underline{\tau},\ldots\}$
corresponding to the set of patterns $\{\sigma,\tau,\ldots\}$,
\item Show that for all $\pi\in\s_n$ the 
map $\pi\mapsto t_1t_2\cdots t_n$ 
with $t_i=(\underline{\sigma}+\underline{\tau}+\cdots)_i\,\pi$,
$1\leq i\leq n$, is a permutation code
based on (a transform of) the Lehmer code of $\pi$.
\end{itemize}

\noindent
This technique was initiated by the author in \cite{Vaj_11}
where the transform $\Delta$ is introduced. 
Before giving our first theorem, we need some further considerations
on this transform.
For a permutation $\pi\in\s_n$ with $L(\pi)$ its Lehmer code,
we call $\Delta(L(\pi))\in S_n$,
the {\it McMahon code} of $\pi$.
This is justified by the following result which is a consequence
of Theorem 13 and Corollary 6 in \cite{Vaj_11}.
\begin{Pro}
\label{Pro_Vaj_11}
If $s_1s_2\cdots s_n$ is the  McMahon code of 
$\pi\in\s_n$, then $\maj\,\pi=\sum_{i=1}^n s_i$. 
\end{Pro}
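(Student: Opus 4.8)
The plan is to unfold the definition of the transform $\Delta$ and reduce the whole statement to a single combinatorial fact relating the Lehmer code to the descent set of $\pi$. Write $L(\pi)=t_1t_2\cdots t_n$ and $s_1s_2\cdots s_n=\Delta(L(\pi))$. Since $t$ is subexcedant, for $1\le i<n$ we have $0\le t_i\le i-1$ and $0\le t_{i+1}\le i$, so the difference $t_i-t_{i+1}$ lies in $\{-i,\dots,i-1\}$. Hence the reduction modulo $i$ in the definition of $\Delta$ takes only two forms: $s_i=t_i-t_{i+1}$ when $t_i\ge t_{i+1}$, and $s_i=t_i-t_{i+1}+i$ when $t_i<t_{i+1}$ (the boundary value $t_i-t_{i+1}=-i$ falling in the second case and giving $s_i=0$). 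This removes the modular arithmetic and turns $s_i$ into an ordinary integer expression governed entirely by the sign of $t_i-t_{i+1}$.

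The key step, and the one I expect to be the main obstacle, is to show that this sign records exactly whether position $i$ is a descent, namely that $t_i<t_{i+1}$ holds if and only if $\pi_i>\pi_{i+1}$. I would prove this directly from the definition of the Lehmer code (equivalently $t_j=(b-\underline a)_j\,\pi$), where $t_j$ counts the indices $k<j$ with $\pi_k>\pi_j$. In the descent case $\pi_i>\pi_{i+1}$, every index $k<i$ with $\pi_k>\pi_i$ also satisfies $\pi_k>\pi_{i+1}$, and in addition the index $k=i$ itself contributes to $t_{i+1}$; this forces $t_{i+1}\ge t_i+1$. In the ascent case $\pi_i<\pi_{i+1}$, the index $k=i$ does not contribute to $t_{i+1}$, while every $k<i$ counted by $t_{i+1}$ is also counted by $t_i$, giving $t_{i+1}\le t_i$. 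Together these establish the equivalence.

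With this equivalence in hand, the conclusion follows by a telescoping computation. Using the two-case formula for $s_i$ and $s_n=t_n$, I would write
\[
\sum_{i=1}^n s_i=t_n+\sum_{i=1}^{n-1}(t_i-t_{i+1})+\sum_{\substack{1\le i<n\\ \pi_i>\pi_{i+1}}} i,
\]
where the middle sum collects the $t_i-t_{i+1}$ terms and the last sum collects the extra $+i$ contributed precisely at the descents. The middle sum telescopes to $t_1-t_n$, and $t_1=0$ because $t$ is subexcedant, so the first two terms cancel and leave $\sum_{i=1}^n s_i=\sum_{\pi_i>\pi_{i+1}} i=\maj\,\pi$, as required. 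The only genuinely combinatorial input is the descent characterization of the previous paragraph; the rest is bookkeeping, and the example $\pi=5\,2\,1\,6\,4\,3$, where both sides equal $12$, serves as a sanity check.
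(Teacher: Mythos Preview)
Your argument is correct. The two-case unpacking of the modular definition of $\Delta$ is accurate, the equivalence ``$t_i<t_{i+1}$ iff $\pi_i>\pi_{i+1}$'' is proved cleanly from the definition of the Lehmer code, and the telescoping computation is sound (including the boundary check $t_1=0$).

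As for comparison: the paper does not actually prove this proposition here. It simply records it as a consequence of Theorem~13 and Corollary~6 of \cite{Vaj_11}, where the McMahon code is developed via an algorithmic interpretation (iterated right circular shifts of prefixes). Your route is different and more self-contained: you bypass the constructive machinery entirely and reduce the identity to the single combinatorial observation that descents of $\pi$ are detected by strict increases in its Lehmer code. That observation is exactly the ``remark'' the paper later invokes without proof at the start of the proof of Theorem~\ref{3stats}, so your argument has the pleasant side effect of supplying a justification for that step as well. What the paper's approach buys is a structural/algorithmic understanding of the code (used, e.g., in Remark~\ref{last_pos}); what your approach buys is a short, elementary, telescoping proof that needs nothing beyond the definitions in the present paper.
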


\noindent
Also in \cite{Vaj_11} is given the next corollary, 
expressed here in terms of patterns involvement.

\begin{Co}
\label{Cons_Mah}
For any $\pi\in\s_n$, the McMahon code $s=s_1s_2\cdots s_n$ of $\pi$ is 
given by:

$$
s_i=\left\{ \begin {array}{ccc}
((a-\underline{c}b)+(b-\underline{a}c)+(c-\underline{b}a))_i\,\pi   & {\rm if} &  i \neq n \\
(b-a]\,\pi            & {\rm if} &  i=n. 
\end {array}
\right.
$$ 
\end{Co}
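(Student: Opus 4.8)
The plan is to verify the formula directly by unfolding the definitions of the Lehmer code and of the transform $\Delta$, and by reading each pointed pattern as an explicit count of entries to the left of position $i$. Writing $t_1t_2\cdots t_n=L(\pi)$, recall from Definition the identity $t_i=|\{j<i : \pi_j>\pi_i\}|$, and that the McMahon code satisfies $s_i=(t_i-t_{i+1})\bmod i$ for $i<n$ and $s_n=t_n$. I would dispose of the case $i=n$ first: since $s_n=t_n$ counts the $j<n$ with $\pi_j>\pi_n$, and the bracketed pattern $b-a]$ forces its smaller letter to sit in the last position, we get $s_n=(b-a]\,\pi$ at once.

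For $i<n$ I would observe that each of the three pointed patterns $a-\underline{c}b$, $b-\underline{a}c$, $c-\underline{b}a$ places its underlined letter at position $i$ and its trailing undashed letter at position $i+1$, so each one counts the $j<i$ according to where $\pi_j$ falls relative to the adjacent pair $(\pi_i,\pi_{i+1})$. Explicitly, $(a-\underline{c}b)_i\,\pi=|\{j<i:\pi_j<\pi_{i+1}<\pi_i\}|$, $(b-\underline{a}c)_i\,\pi=|\{j<i:\pi_i<\pi_j<\pi_{i+1}\}|$, and $(c-\underline{b}a)_i\,\pi=|\{j<i:\pi_{i+1}<\pi_i<\pi_j\}|$. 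The sign of $\pi_i-\pi_{i+1}$ decides which patterns can occur: if $\pi_i<\pi_{i+1}$ only the middle one survives, while if $\pi_i>\pi_{i+1}$ only the outer two do.

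The computation then reduces to comparing these counts with $(t_i-t_{i+1})\bmod i$, and the bridge is the identity $t_{i+1}=|\{j<i:\pi_j>\pi_{i+1}\}|+[\pi_i>\pi_{i+1}]$, where $[\,\cdot\,]$ equals $1$ if the inequality holds and $0$ otherwise. When $\pi_i<\pi_{i+1}$ this gives $t_i-t_{i+1}=|\{j<i:\pi_i<\pi_j<\pi_{i+1}\}|$, a quantity already lying in $\{0,1,\ldots,i-1\}$, so reduction modulo $i$ is the identity and the formula follows. When $\pi_i>\pi_{i+1}$, I would partition the $i-1$ positions $j<i$ into those with $\pi_j<\pi_{i+1}$ (count $A$), those with $\pi_{i+1}<\pi_j<\pi_i$ (count $M$), and those with $\pi_j>\pi_i$ (count $B$), so that $A+M+B=i-1$, while $t_i-t_{i+1}=-(M+1)$; hence $(t_i-t_{i+1})\bmod i=i-1-M=A+B$, which is exactly the surviving sum $(a-\underline{c}b)_i\,\pi+(c-\underline{b}a)_i\,\pi$.

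The only delicate point is this descent case, where $t_i-t_{i+1}$ is negative and the modular reduction must be shown to land in the correct residue. The bookkeeping $A+M+B=i-1$ makes this transparent, since it simultaneously certifies the bound $0\le i-1-M\le i-1$ and identifies the reduced value with the pattern count. I expect this to be the main, and essentially the only, obstacle; the remaining steps are routine once the three patterns are recognized as neighbour counts of the adjacent pair $(\pi_i,\pi_{i+1})$.
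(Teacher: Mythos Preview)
Your argument is correct in every detail: the interpretation of the three pointed patterns as counts of $j<i$ classified by where $\pi_j$ sits relative to the adjacent pair $(\pi_i,\pi_{i+1})$ is right, the ascent case gives $t_i-t_{i+1}\ge 0$ directly, and in the descent case the partition $A+M+B=i-1$ together with $t_i-t_{i+1}=-(M+1)$ cleanly yields $s_i=A+B$ after the modular reduction.

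As for comparison with the paper: there is nothing to compare, because the paper does not prove this corollary at all---it simply imports it from \cite{Vaj_11}, rephrased in the pattern language. Your proof is therefore a genuine addition: a short, self-contained verification that avoids any dependence on the earlier paper and works straight from the definitions of $L$ and $\Delta$. The only stylistic remark is that your write-up reads more like a plan than a finished proof; once you drop the ``I would'' phrasing and state the case split and the identity $t_{i+1}=|\{j<i:\pi_j>\pi_{i+1}\}|+[\pi_i>\pi_{i+1}]$ as facts, it becomes a clean one-paragraph argument.
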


\noindent
A consequence of Corollary \ref{Cons_Mah} is

\begin{Co}
\label{maj]} $ $
For any $\pi\in\s_n$ we have

\begin{enumerate}
\item 
$\maj\,\pi=((a-cb)+(b-ac)+(c-ba)+(b-a])\,\pi$,
\item 
If the McMahon code of $\pi$ is $s_1s_2\cdots s_n$, then
$$
((a-cb)+(b-ac)+(c-ba))\,\pi=\sum_{i=1}^{n-1}s_i.
$$
\end{enumerate}
\end{Co}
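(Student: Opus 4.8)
The plan is to derive both statements directly from Corollary \ref{Cons_Mah} and Proposition \ref{Pro_Vaj_11}, using the general summation identity $(\sigma)\,\pi=\sum_{i=1}^n(\underline{\sigma})_i\,\pi$ recalled in the introduction. I would establish part (2) first, since part (1) then follows by adjoining a single boundary term.

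For part (2), I would begin from the value of $s_i$ furnished by Corollary \ref{Cons_Mah}: for $i\neq n$ we have $s_i=((a-\underline{c}b)+(b-\underline{a}c)+(c-\underline{b}a))_i\,\pi$. Summing over $i$ from $1$ to $n-1$ and comparing with the summation identity applied to each of the three pointed patterns, the only discrepancy is the absent $i=n$ term. The one point requiring care—and really the only substantive ingredient—is that in each of these pointed patterns the privileged letter sits in the \emph{middle} of a three-letter word, so an occurrence with $\pi_i$ in that role requires an entry strictly to the right of position $i$; hence $(a-\underline{c}b)_n\,\pi=(b-\underline{a}c)_n\,\pi=(c-\underline{b}a)_n\,\pi=0$. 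Consequently $\sum_{i=1}^{n-1}s_i=\sum_{i=1}^{n}((a-\underline{c}b)+(b-\underline{a}c)+(c-\underline{b}a))_i\,\pi$, and the summation identity rewrites the right-hand side as $((a-cb)+(b-ac)+(c-ba))\,\pi$, which is exactly the claim.

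For part (1), I would invoke Proposition \ref{Pro_Vaj_11}, giving $\maj\,\pi=\sum_{i=1}^n s_i$. Splitting off the last term as $\sum_{i=1}^n s_i=\left(\sum_{i=1}^{n-1}s_i\right)+s_n$, I would substitute part (2) for the first summand and the value $s_n=(b-a]\,\pi$ from Corollary \ref{Cons_Mah} for the last, obtaining $\maj\,\pi=((a-cb)+(b-ac)+(c-ba))\,\pi+(b-a]\,\pi$. Collecting the four pattern-counts under one statistic yields the stated formula. Beyond this, the argument is pure bookkeeping; the only place where anything could go wrong is the vanishing of the three middle-pointed patterns at the final position, which is precisely what aligns the McMahon-code partial sum with the full pattern statistic.
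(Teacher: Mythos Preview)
Your argument is correct and is exactly the unpacking the paper has in mind: it states Corollary~\ref{maj]} as an immediate consequence of Corollary~\ref{Cons_Mah} (together with Proposition~\ref{Pro_Vaj_11}) without spelling out any details. Your one substantive observation---that the middle-pointed patterns $(a-\underline{c}b)$, $(b-\underline{a}c)$, $(c-\underline{b}a)$ vanish at $i=n$---is precisely what makes the partial sum $\sum_{i=1}^{n-1}s_i$ coincide with the full pattern statistic, and the rest is indeed bookkeeping.
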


In \cite[Theorem 12]{Vaj_11} is given 
an algorithmic meaning of the McMahon code $s_1s_2\cdots s_n$
of $\pi\in\s_n$:
$\pi$ can be obtained from the identity $\iota=1\,2\,\ldots \,n\in\s_n$
by iteratively performing on $\iota$, $s_i$
right circular shifts of its length-$i$ prefix,
for $i=n,n-1,\ldots,2,1$.
For example, the construction of $\pi=5\,2\,1\,6\,4\,3\in\s_6$ with its 
McMahon code $0\,1\,2\,2\,4\,3$ is given in Figure \ref{fig_double}.b. 

\medskip

\begin{figure}
\begin{center}
\begin{tabular}{cc}
\begin{tabular}{|rcl|}
\hline
$\pi$              & $=$ & $5\,2\,1\,6\,4\,3$ \\
$L(\pi)$           & $=$ & $0\,1\,2\,0\,2\,3$ \\
$\Delta(L(\pi))$   & $=$ & $0\,1\,2\,2\,4\,3$ \\
$\Gamma(L(\pi))$   & $=$ & $0\,1\,2\,2\,3\,5$ \\
$\Theta(L(\pi))$   & $=$ & $0\,1\,2\,2\,2\,3$ \\
$\Lambda(L(\pi))$  & $=$ & $0\,1\,2\,0\,3\,5$ \\
$\Upsilon(L(\pi))$ & $=$ & $0\,0\,2\,3\,2\,3$ \\
$\Psi(L(\pi))$     & $=$ & $0\,0\,2\,1\,0\,3$ \\
\hline
\end{tabular}
&
\begin{tabular}{|lc|} 
\hline
after $s_i$ right circular shifts & \\
of the length-$i$ prefix & $(i,s_i)$ 
  \\ \hline
$1\,2\,3\,4\,5\,6$ &  \\
$4\,5\,6\,1\,2\,3$ & $(6,3)$ \\
$5\,6\,1\,2\,4\,3$ & $(5,4)$ \\
$1\,2\,5\,6\,4\,3$ & $(4,2)$ \\
$2\,5\,1\,6\,4\,3$ & $(3,2)$ \\
$5\,2\,1\,6\,4\,3$ & $(2,1)$ \\
$5\,2\,1\,6\,4\,3$ & $(1,0)$ \\
\hline
\end{tabular}  
 
\\
\\
(a) & (b)
\end{tabular}
\caption{\label{fig_double}
(a) The permutation $\pi$ together with its
Lehmer code and its transforms.
(b) The construction of $\pi=5\,2\,1\,6\,4\,3$
from its McMahon code $s=\Delta(L(\pi))=0\,1\,2\,2\,4\,3$. 
 } 
\end{center}
\end{figure}

With this algorithmic interpretation of McMahon code we have the 
following remark.

\begin{Rem}
\label{last_pos}
If $\sigma,\tau\in\s_n$ are two permutations with their  McMahon codes 
differing only in the last position, then $\sigma_i\neq\tau_i$
for all $i$, $1\leq i\leq n$.
\end{Rem}

\begin{The} 
\label{S_2}
The following statistic 
(statistic $S_2$ in \cite[Conjecture 11]{BabSteim})
is Mahonian
$$
(a-cb)+(b-ac)+(c-ba)+[b-a).
$$
\end{The}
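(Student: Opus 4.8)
The plan is to realize the statistic $S_2$ as the coordinate sum of a new permutation code obtained by modifying only the last entry of the McMahon code. Write $s=s_1s_2\cdots s_n=\Delta(L(\pi))$ for the McMahon code of $\pi$. The expression in Theorem \ref{S_2} differs from the one for $\maj$ in Corollary \ref{maj]}(1) only in its last summand: $\maj$ uses $(b-a]$ whereas $S_2$ uses $[b-a)$. Since by Corollary \ref{maj]}(2) the first three patterns already account for $\sum_{i=1}^{n-1}s_i$, I would define a word $s'=s'_1s'_2\cdots s'_n$ by $s'_i=s_i$ for $1\le i<n$ and $s'_n=[b-a)\,\pi$. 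Then $\sum_{i=1}^n s'_i=((a-cb)+(b-ac)+(c-ba))\,\pi+[b-a)\,\pi=S_2\,\pi$, so it suffices to prove that the map $\pi\mapsto s'$ is a permutation code; its associated statistic is then Mahonian.

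First I would compute $[b-a)\,\pi$ explicitly. The bracket forces the larger letter of the pattern to sit in position $1$, so $[b-a)\,\pi$ counts the indices $j$ with $1<j\le n$ and $\pi_j<\pi_1$. As every entry smaller than $\pi_1$ necessarily occurs to the right of position $1$, there are exactly $\pi_1-1$ of them, whence $[b-a)\,\pi=\pi_1-1$. In particular $0\le s'_n\le n-1$, and since the remaining entries coincide with those of the subexcedant sequence $s$, the word $s'$ lies in $S_n$.

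It remains to show that $\pi\mapsto s'$ is injective; a cardinality argument then upgrades this to a bijection. Suppose $\pi\neq\pi'$ but $s'(\pi)=s'(\pi')$. Comparing the first $n-1$ coordinates shows that the McMahon codes of $\pi$ and $\pi'$ agree in positions $1,\dots,n-1$. If they also agreed in position $n$ they would be equal, forcing $\pi=\pi'$ because the McMahon code is itself a permutation code; so the two McMahon codes differ exactly in their last entry. At this point Remark \ref{last_pos} applies and yields $\pi_i\neq\pi'_i$ for every $i$, in particular $\pi_1\neq\pi'_1$. Then $s'_n(\pi)=\pi_1-1\neq\pi'_1-1=s'_n(\pi')$, contradicting $s'(\pi)=s'(\pi')$. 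Hence $\pi\mapsto s'$ is a bijection onto $S_n$ and $S_2$ is Mahonian.

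The crux of the argument, and the step I expect to be the only real obstacle, is the handling of the case where the McMahon codes differ solely in the last position: here the modified last coordinate $s'_n$ is not a function of $s_n$ alone but depends on the global datum $\pi_1$, so bijectivity cannot be read off from any position-wise code transform. Remark \ref{last_pos} is precisely what bridges this gap, guaranteeing that two permutations whose McMahon codes differ only at the end already disagree in their first entries, which is exactly what makes $s'_n$ separate them.
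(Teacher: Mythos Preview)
Your argument is correct and follows essentially the same route as the paper: define the new code by replacing the last McMahon entry with $\pi_1-1=[b-a)\,\pi$, use Corollary~\ref{maj]}(2) to identify its coordinate sum with the statistic, and invoke Remark~\ref{last_pos} for injectivity in the only nontrivial case. Your write-up is in fact slightly more explicit than the paper's (you spell out why $[b-a)\,\pi=\pi_1-1$ and why $s'\in S_n$), but the strategy and key ingredients are identical.
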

\begin{proof}
To a permutation $\pi=\pi_1\pi_2\cdots \pi_n\in\s_n$ 
with its McMahon code $p_1p_2\cdots p_{n-1}p_n$ 
we associate a subexcedant sequence $p_1p_2\cdots p_{n-1}(\pi_1-1)$.
Clearly, by the second point of Corollary~\ref{maj]}
$$
((a-cb)+(b-ac)+(c-ba)+[b-a))\,\pi=\sum_{i=1}^{n-1}p_i+(\pi_1-1).
$$
Now we show that the map 
\begin{equation}
\label{a_map}
\pi\mapsto p_1p_2\cdots p_{n-1}(\pi_1-1)
\end{equation}
is an injection and so (by cardinality reasons)
a permutation code.

\noindent
Let $\sigma\neq\tau$ be two permutations in $\s_n$ and let $i$ be the 
leftmost position where $s_1s_2\cdots s_n$ and
$t_1t_2\cdots t_n$, their McMahon codes, differ. 
If $i<n$, then the subexcedant sequences corresponding to
$\sigma$ and $\tau$ (defined in the map in relation (\ref{a_map}))
differ also in position $i$.
If $i=n$, then by Remark~\ref{last_pos}, $\sigma_1\neq\tau_1$
and again, the subexcedant sequences corresponding to
$\sigma$ and $\tau$ are different.
\end{proof}

The reduction of a sequence of $n$ distinct integers
is the permutation in $\s_n$ 
obtained by replacing the smallest member by 1, the
second-smallest by 2, \ldots , and the largest by $n$. 

\begin{Rem} 
\label{rem_maj}
Let $\pi=\pi_1\pi_2\cdots \pi_n\in\s_n$.
If $\tau\in\s_{n-1}$ is the reduction of 
$\pi_2\cdots \pi_n$, then 
\begin{eqnarray*}
((a-cb)+(b-ca)+(c-ba))\,\pi & = & \maj\,\tau.
\end{eqnarray*}

\end{Rem}
\begin{proof}
For all $i\geq 2$, 
$((a-\underline{c}b)+(b-\underline{c}a)+(c-\underline{b}a))_i\,\pi$
equals $i-1$ if $i$ is a descent in $\pi$ 
(and so, if $i-1$ is a descent in $\tau$) and $0$ otherwise.
Now, summing for all $i$, $2\leq i\leq n$, the desired relation holds.
\end{proof}

\begin{The} 
\label{TS_4}
The following statistic 
(statistic $S_4$ in \cite[Conjecture 11]{BabSteim}) 
is Mahonian
$$
(a-cb)+(b-ca)+(c-ba)+[b-a).
$$
\end{The}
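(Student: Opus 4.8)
The plan is to mimic the proof of Theorem~\ref{S_2}, exploiting the close resemblance between the two statistics. Comparing $S_2$ and $S_4$, the only difference is in the middle pattern: $S_2$ uses $b-ac$ while $S_4$ uses $b-ca$. By Corollary~\ref{maj]} together with Remark~\ref{rem_maj}, the statistic $((a-cb)+(b-ca)+(c-ba))\,\pi$ equals $\maj\,\tau$ where $\tau\in\s_{n-1}$ is the reduction of $\pi_2\cdots\pi_n$. So my first step is to reinterpret the unbracketed part of $S_4$ not through the McMahon code of $\pi$ itself, but through the McMahon code of this reduced permutation $\tau$ of length $n-1$.

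Concretely, I would associate to $\pi=\pi_1\pi_2\cdots\pi_n$ the subexcedant sequence $q_1q_2\cdots q_{n-1}(\pi_1-1)$, where $q_1q_2\cdots q_{n-1}$ is the McMahon code of $\tau\in\s_{n-1}$ and the last entry $\pi_1-1$ accounts for the bracketed pattern $[b-a)$ (which counts the entries to the right of $\pi_1$ that are smaller than it, namely $\pi_1-1$). Using Proposition~\ref{Pro_Vaj_11} applied to $\tau$ and Remark~\ref{rem_maj}, one gets
$$
((a-cb)+(b-ca)+(c-ba)+[b-a))\,\pi=\sum_{i=1}^{n-1}q_i+(\pi_1-1),
$$
which is exactly the sum of the entries of the associated sequence. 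It remains to prove that the map $\pi\mapsto q_1q_2\cdots q_{n-1}(\pi_1-1)$ is injective, and then cardinality reasons upgrade it to a permutation code, establishing Mahonity.

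For the injectivity argument I would take two distinct permutations $\sigma\neq\tau$ in $\s_n$ and compare their associated sequences. The natural case split is on whether the reductions of their length-$(n-1)$ suffixes coincide. If these reductions differ, their McMahon codes differ in some position $i\leq n-1$, forcing the associated sequences to differ there. If the reductions agree but $\sigma\neq\tau$, then the permutations must differ in a way that forces distinct first entries, so the last coordinates $\sigma_1-1$ and $\tau_1-1$ disagree. The main obstacle is this second case: I would need to verify carefully that equal suffix-reductions together with $\sigma\neq\tau$ genuinely forces $\sigma_1\neq\tau_1$ (two full permutations of $\s_n$ whose deleted-first-entry reductions are identical are determined by their first entry), playing the role that Remark~\ref{last_pos} played in Theorem~\ref{S_2}. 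Once that case analysis is clean, the injectivity is complete and the theorem follows.
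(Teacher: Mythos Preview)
Your proposal is correct and follows essentially the same approach as the paper: it too associates to $\pi$ the sequence whose first $n-1$ entries are the McMahon code of the reduction $\tau$ of $\pi_2\cdots\pi_n$ and whose last entry is $\pi_1-1$, then uses Remark~\ref{rem_maj} and Proposition~\ref{Pro_Vaj_11} for the sum identity. The paper's injectivity argument is phrased as a direct inverse (the prefix recovers $\tau$, and $\tau$ together with $\pi_1$ recovers $\pi$), which is exactly the content of your second case, so the arguments coincide.
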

\begin{proof}
For $\pi=\pi_1\pi_2\cdots \pi_n\in\s_n$ let $\tau\in\s_{n-1}$
be the reduction of $\pi_2\cdots \pi_n$ and  
define $s=s_1s_2\cdots s_n\in S_n$ by:
\begin{itemize}
\item $s_1s_2\cdots s_{n-1}$ is the McMahon code of $\tau$, and 
\item $s_n=\pi_1-1$.
\end{itemize}
First, the map $\pi\mapsto s$ is a permutation code. 
Indeed, $s_1s_2\cdots s_{n-1}s_n\in S_n$ and
the prefix $s_1s_2\cdots s_{n-1}$ uniquely determines $\tau$, 
which together with  $s_n$ determines $\pi$.
Now we show that 
\begin{equation}
\label{S_4}
\sum_{i=1}^n s_i=((a-cb)+(b-ca)+(c-ba)+[b-a))\,\pi.
\end{equation}

\noindent
Using Remark \ref{rem_maj} we have
\begin{eqnarray*}
((a-cb)+(b-ca)+(c-ba))\,\pi & = & \maj\,\tau\\
                            & = & \sum_{i=1}^{n-1} s_i,
\end{eqnarray*}
and since $s_n=\pi_1-1=[b-a)\,\pi$ relation (\ref{S_4}) holds.
\end{proof}

\begin{The} 
\label{3stats}
The following statistics (defined in \cite[Proposition 9]{BabSteim}
or equivalent to them) are Mahonian. 
\begin{enumerate}
\item $\stat  =(a-cb)+(b-ac)+(c-ba)+(ba),$ 
\item $\stat'=(b-ac)+(b-ca)+(c-ba)+(ba),$
\item $\stat''=(a-cb)+(c-ab)+(c-ba)+(ba).$
\end{enumerate}
\end{The}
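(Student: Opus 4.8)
The plan is to handle the three statistics separately but by one common device: for each I exhibit a permutation code obtained by applying one of the transforms of Definition~\ref{six_fun} to the Lehmer code, and whose entry sum is exactly the statistic in question. Concretely, I claim that
$$
\stat\,\pi=\sum_{i=1}^n\Gamma(L(\pi))_i,\quad
\stat'\,\pi=\sum_{i=1}^n\Theta(L(\pi))_i,\quad
\stat''\,\pi=\sum_{i=1}^n\Lambda(L(\pi))_i.
$$
Since $L$ is a permutation code and $\Gamma,\Theta,\Lambda$ are bijections of $S_n$ (by the proposition that the six functions of Definition~\ref{six_fun} are bijections), each composite $\Gamma\circ L$, $\Theta\circ L$, $\Lambda\circ L$ is again a permutation code, and the entry sum of any permutation code is a Mahonian statistic. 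So once the three displayed identities are established, the theorem follows.

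To prove the identities I would, as in Corollary~\ref{Cons_Mah}, produce a pointed-pattern formula for each transformed code, read off position by position. The fact that makes everything run is the following reading of the Lehmer code $t=L(\pi)$: for $1<i\le n$ one has $t_{i-1}<t_i$ exactly when $\pi_{i-1}>\pi_i$ (a descent at $i-1$) and $t_{i-1}\ge t_i$ exactly when $\pi_{i-1}<\pi_i$; this is immediate from $t_i=(b-\underline{a})_i\,\pi$ by comparing which $j<i-1$ contribute to $t_{i-1}$ and to $t_i$. With this, the defining case split of $\Theta$, $\Lambda$, $\Gamma$ becomes a split into the descent and ascent cases of the adjacent pair $(\pi_{i-1},\pi_i)$, and decomposing $t_i$, $t_{i-1}-t_i$, or $i+t_{i-1}-t_i$ according to the size of $\pi_j$ relative to $\pi_{i-1}$ and $\pi_i$ yields, for $1<i\le n$ and with the privileged letter played by $\pi_i$,
$$
\Theta(L(\pi))_i=((b-a\underline{c})+(b-c\underline{a})+(c-b\underline{a})+(b\underline{a}))_i\,\pi,
$$
$$
\Lambda(L(\pi))_i=((a-c\underline{b})+(c-a\underline{b})+(c-b\underline{a})+(b\underline{a}))_i\,\pi,
$$
$$
\Gamma(L(\pi))_i=((a-c\underline{b})+(b-a\underline{c})+(c-b\underline{a})+(b\underline{a}))_i\,\pi;
$$
summing over $i$ and recalling that these pointed patterns sum to the corresponding unpointed ones gives the three displayed identities. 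A pleasant cross-check is that the ascent case of $\Gamma$ reproduces that of $\Theta$ while its descent case reproduces that of $\Lambda$, which is precisely why $\stat$ is the mixture of $\stat'$ and $\stat''$.

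The routine but delicate part is the case bookkeeping above. In an ascent at $i-1$ one checks that $j=i-1$ does not contribute to $t_i$, so that $t_i$ and $t_{i-1}-t_i$ count, respectively, the large-valued and the strictly-intermediate entries to the left, producing $c-a\underline{b}$ (resp.\ $b-a\underline{c}$). In a descent the term $j=i-1$ always supplies the adjacent pair $b\underline{a}$, and one must split the remaining $j<i-1$ into the value-classes below $\pi_i$, between $\pi_i$ and $\pi_{i-1}$, and above $\pi_{i-1}$; for $\Lambda$ and $\Gamma$ this rests on rewriting $i+t_{i-1}-t_i$ as $(i-1)$ minus the number of strictly-intermediate left entries, i.e.\ as the complementary count $(a-c\underline{b})+(c-b\underline{a})+(b\underline{a})$. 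The main obstacle I anticipate is exactly this descent computation for $\Gamma$, where one must first verify that the modular reduction $(t_{i-1}-t_i)\bmod i$ does not truncate—namely that $t_{i-1}-t_i+i$ already lies in $\{0,\dots,i-1\}$—so that the arithmetic rewriting as a complementary pattern count is legitimate; the ascent case, where the argument is already nonnegative and bounded by $i-1$, needs no reduction at all.
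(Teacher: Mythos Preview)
Your proposal is correct and follows essentially the same route as the paper: associate $\stat,\stat',\stat''$ with the transforms $\Gamma,\Theta,\Lambda$ of the Lehmer code, use the equivalence $t_{i-1}\geq t_i\Leftrightarrow \pi_{i-1}<\pi_i$ to turn the defining case splits into ascent/descent cases, and verify the very same pointed-pattern identities for the $i$th entry. Your treatment is in fact slightly more explicit about why the modular reduction in $\Gamma$ is harmless in the descent case, a point the paper leaves to the reader.
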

\begin{proof}
Let $\pi\in\s_n$ and $t$ its Lehmer code. We will use the remark that
$\pi_{i-1}<\pi_i$ if and only if $t_{i-1}\geq t_i$.

\noindent 1.
Let $s_1s_2\cdots s_n=\Gamma(t)$.
It is routine to check that

\begin{equation}\label{b_au(c)}
(b-a\underline{c})_i\,\pi
=\left\{ \begin {array}{ccc}
   t_{i-1}-t_i & {\rm if} &  t_{i-1}\geq t_i\\
   0           & {\rm if} &  t_{i-1}< t_i,
\end {array}
\right.
\end{equation}
and
\begin{equation}\label{a_cb+}
((a-c\underline{b})+(c-b\underline{a})+(b\underline{a}))_i\,\pi
=\left\{ \begin {array}{ccc}
    0 & {\rm if} &  t_{i-1}\geq t_i\\
   i+t_{i-1}-t_i & {\rm if} &  t_{i-1}< t_i,
\end {array}
\right.
\end{equation}
for $1< i\leq n$.

\noindent
Summing both relations and using the definition of $\Gamma$ we have 
$$
((a-c\underline{b})+(b-a\underline{c})+(c-b\underline{a})+(b\underline{a}))_i\,\pi=s_i,
$$
and thus
$$
((a-cb)+(b-ac)+(c-ba)+(ba))\,\pi=\sum_{i=1}^ns_i.
$$

\medskip

\noindent 2.
Let $s_1s_2\cdots s_n=\Theta(t)$. Similarly, we have 

$$((b-c\underline{a})+(c-b\underline{a})+(b\underline{a}))_i\,\pi
=\left\{ \begin {array}{ccc}
    0   & {\rm if} &  t_{i-1}\geq t_i\\
    t_i & {\rm if} &  t_{i-1}< t_i,
\end {array}
\right.
$$
for $1< i\leq n$.

\noindent
Summing this relation with (\ref{b_au(c)}) and using the definition of $\Theta$ we have
$$
((b-a\underline{c})+(b-c\underline{a})+(c-b\underline{a})+(b\underline{a}))_i\,\pi=s_i,
$$
and thus
$$
((b-ac)+(b-ca)+(c-ba)+(ba))\,\pi=\sum_{i=1}^ns_i.
$$

\medskip

\noindent 3.
Now let $s_1s_2\cdots s_n=\Lambda(t)$.

$$
(c-a\underline{b})_i\,\pi
=\left\{ \begin {array}{ccc}
   t_i & {\rm if} &  t_{i-1}\geq t_i\\
   0   & {\rm if} &  t_{i-1}< t_i,
\end {array}
\right.
$$
for $1< i\leq n$, which together with (\ref{a_cb+}) and the definition of $\Lambda$ gives

$$
((a-c\underline{b})+(c-a\underline{b})+(c-b\underline{a})+(b\underline{a}))_i\,\pi=s_i,
$$
and so
$$
((a-cb)+(c-ab)+(c-ba)+(ba))\,\pi=\sum_{i=1}^ns_i.
$$

\noindent
Since $\Gamma$, $\Theta$ and $\Lambda$ are code transforms, 
it results that the three statistics are Mahonian.
\end{proof}

Before proving our next theorem we need the following result.

\begin{Le} For any integer $n$ and permutation $\pi\in\s_n$ we have
\label{lemma_dvr}
$$((b-ca)+(ba))\,\pi=((b-ac)+(b-a])\,\pi.$$
\end{Le}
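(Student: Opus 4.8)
The plan is to prove the identity pointwise, by fixing a position $i$ and comparing the local contributions of the two sides, then summing over $i$. Both sides count occurrences of vincular patterns, so the natural strategy is to split each pattern into its pointed versions and tally, for each $i$, how many occurrences have their privileged letter sitting at position $i$. Concretely, I would write
$$((b-ca)+(ba))\,\pi=\sum_{i=1}^n\bigl((b-\underline{c}a)+(b-c\underline{a})+(\underline{b}a)+(b\underline{a})\bigr)_i\,\pi,$$
and similarly expand the right-hand side $((b-ac)+(b-a])\,\pi$ into its pointed constituents, then argue the two sums agree.

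First I would look for the cleanest choice of privileged element so that the pointed counts on each side have a transparent meaning in terms of the entry $\pi_i$: for patterns of the form $b-ca$, $ba$, $b-ac$ and the bracketed $b-a]$, the letter $a$ (the smallest) is the most convenient anchor, since pointing at the position playing the role of $a$ turns ``number of occurrences'' into ``number of larger earlier entries in a prescribed configuration.'' With that choice I expect each pointed statistic to reduce to a count of pairs $(j,i)$ or $(j,k,i)$ satisfying simple inequalities among $\pi_j,\pi_k,\pi_i$ and their positions, after which the equality becomes a matter of checking that the multiset of configurations counted on the left coincides with the one counted on the right. A promising reformulation is to recognize that $[b-a)$ and the bracketed closing pattern $b-a]$ measure boundary contributions (first entry, last entry), and to see the lemma as redistributing an ``adjacent descent'' contribution $(ba)$ against a ``closing'' contribution $(b-a])$ while trading the middle letter's position between $ca$ and $ac$.

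The main obstacle, I anticipate, is the bracketed pattern $b-a]$, whose occurrences are forced to end at the last position $n$; its contribution is not position-uniform and must be handled separately from the dashed patterns, so a clean position-by-position cancellation will fail at $i=n$ and require a dedicated boundary argument. The likeliest successful route around this is to avoid pointing altogether and instead set up a sign-reversing or contribution-matching bijection on the set of relevant triples/pairs of positions: show that occurrences of $b-ca$ together with $ba$ are in bijection with occurrences of $b-ac$ together with $b-a]$, by sending a configuration to one obtained by relocating or reinterpreting the role of the anchoring letter. Once such a bijection (or an explicit equality of the two pointed sums for each $i$, with the $i=n$ case verified by hand) is established, summing over $i$ yields the stated identity immediately.
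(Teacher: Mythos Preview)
Your proposal is a plan rather than a proof: you describe several possible lines of attack (pointing at $a$, a bijection on configurations, a separate boundary argument at $i=n$) but do not carry any of them through. The approach is viable, though not in the form you sketch. Pointing at $a$ is awkward; the clean choice is to point at $b$, the leftmost letter in all four patterns. For fixed $j<n$, write $\epsilon_i=+$ if $\pi_i>\pi_j$ and $\epsilon_i=-$ if $\pi_i<\pi_j$ for $i>j$. Then
\[
(\underline{b}\text{-}ca)_j\,\pi+(\underline{b}a)_j\,\pi
=\#\{i\in[j{+}1,n{-}1]:\epsilon_i=+,\ \epsilon_{i+1}=-\}+[\epsilon_{j+1}=-],
\]
\[
(\underline{b}\text{-}ac)_j\,\pi+(\underline{b}\text{-}a])_j\,\pi
=\#\{i\in[j{+}1,n{-}1]:\epsilon_i=-,\ \epsilon_{i+1}=+\}+[\epsilon_n=-],
\]
and the two agree for every $j$ by the elementary fact that in a $\pm$ sequence the number of $+\!-$ transitions minus the number of $-\!+$ transitions equals $[\text{ends }-]-[\text{starts }-]$. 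So the ``main obstacle'' you anticipate at $i=n$ does not arise once the right pointing is chosen, and no bijection needs to be invented.

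The paper, however, does none of this. Its proof is two lines: write $\maj\,\pi$ first by its definition as $((a\text{-}cb)+(b\text{-}ca)+(c\text{-}ba)+(ba))\,\pi$ and then, by Corollary~\ref{maj]}, as $((a\text{-}cb)+(b\text{-}ac)+(c\text{-}ba)+(b\text{-}a])\,\pi$; cancel the common terms $(a\text{-}cb)$ and $(c\text{-}ba)$. Your route is self-contained and in fact yields a refined, position-by-position identity; the paper's route is much shorter but leans on the McMahon-code machinery developed earlier.
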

\begin{proof}$ $

\begin{tabular}{rclcr}
$\maj\,\pi$ & $=$ & $((a-cb)+(b-ca)+(c-ba)+(ba))\,\pi$      &\hspace{1cm} & by definition\\
            & $=$ & $((a-cb)+(b-ac)+(c-ba)+(b-a])\,\pi$ &\hspace{1cm} & by
	    Corollary \ref{maj]}.1
\end{tabular}
%
%
%
%
%

\noindent
and the result holds.
\end{proof}

\begin{The}
\label{second}
The following statistic
(second statistic 
in \cite[Conjecture 8]{BabSteim}) is Mahonian
$$
(a-cb)+(b-ca)+(b-ca)+(ba).
$$
\end{The}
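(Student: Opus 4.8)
The plan is to prove Mahonity not by building a new code transform but by showing that the statistic of the theorem coincides, as a function on $\s_n$, with $\maj$. Since $\maj$ is Mahonian and two statistics that agree pointwise on every permutation have the same distribution, this suffices.

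First I would lay out the two descriptions of $\maj$ that are already available. On one hand there is the defining decomposition from the introduction,
$$\maj\,\pi=((a-cb)+(b-ca)+(c-ba)+(ba))\,\pi,$$
and on the other the reshaped version furnished by Corollary~\ref{maj]}.1,
$$\maj\,\pi=((a-cb)+(b-ac)+(c-ba)+(b-a])\,\pi.$$
The passage between the two is governed precisely by Lemma~\ref{lemma_dvr}, which trades the sub-sum $(b-ca)+(ba)$ for $(b-ac)+(b-a]$. This is the engine of the proof.

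The key step is to single out, inside the statistic of the theorem, the sub-sum $(b-ca)+(ba)$ and to apply Lemma~\ref{lemma_dvr} to replace it by $(b-ac)+(b-a]$. After this single substitution the four summands should line up term by term with the form of $\maj$ in Corollary~\ref{maj]}.1, identifying the statistic with $\maj$ and hence establishing that it is Mahonian. The argument therefore reduces entirely to a pattern-count identity of statistics, with no appeal to the code transforms of Section~2.

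I expect the only genuine obstacle to be the bookkeeping attached to this matching. One must verify that, once the pair $(b-ca)+(ba)$ has been exchanged, the summands left over are exactly $(a-cb)$ and $(c-ba)$, so that the rewritten expression lands on the Corollary~\ref{maj]}.1 form of $\maj$ and not on a merely similar-looking combination; in particular one should keep careful track of which dashed patterns pair off under the lemma and of the right-hand bracket $(b-a]$ produced by the trade. This is a finite and mechanical check, and beyond the identity encoded in Lemma~\ref{lemma_dvr} no further idea is required.
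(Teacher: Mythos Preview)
Your plan breaks at exactly the ``bookkeeping'' step you flagged. After trading $(b-ca)+(ba)$ for $(b-ac)+(b-a]$ via Lemma~\ref{lemma_dvr}, the statistic of the theorem becomes
\[
(a-cb)+(b-ca)+(b-ac)+(b-a],
\]
whereas Corollary~\ref{maj]}.1 reads
\[
\maj=(a-cb)+(b-ac)+(c-ba)+(b-a].
\]
The leftover third summand is $(b-ca)$, not $(c-ba)$; these are genuinely different pattern counts, so the two expressions do not match term by term and your identification with $\maj$ fails. In fact the statistic of the theorem is \emph{not} equal to $\maj$ pointwise: for $\pi=5\,2\,1\,6\,4\,3$ one computes $(a-cb)\,\pi=4$, $(b-ca)\,\pi=1$, $(ba)\,\pi=4$, so the theorem's statistic equals $4+1+1+4=10$, while $\maj\,\pi=1+2+4+5=12$. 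Equidistribution with $\maj$ holds, but equality does not, so no amount of pattern-identity manipulation alone can close the argument.

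The paper's proof does start with the same application of Lemma~\ref{lemma_dvr}, arriving at $(a-cb)+(b-ac)+(b-ca)+(b-a]$, but then it does the real work: it shows that the pointed version of this sum at position $i$ equals the $i$th entry of $\Upsilon(L(\pi))$, where $\Upsilon$ is one of the code transforms from Definition~\ref{six_fun}. Since $\Upsilon$ is a bijection on $S_n$, the map $\pi\mapsto\Upsilon(L(\pi))$ is a permutation code, and summing its entries gives a Mahonian statistic. So the code-transform machinery of Section~2 is essential here; it cannot be bypassed by a direct reduction to $\maj$.
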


\begin{proof}

Let $\pi\in\s_n$ with its Lehmer code $t=t_1t_2\cdots t_n$ and 
$s=s_1s_2\cdots s_n=\Upsilon(t)$.
As previously, it is easy to check that

$$
((a-\underline{c}b)+(b-\underline{c}a)_i\,\pi=
\left\{ \begin {array}{ccc}
    i-t_i-1 & {\rm if} &  t_i< t_{i+1}\\
    0       & {\rm if} &  t_i\geq t_{i+1},
\end {array}
\right.
$$
and
$$
(b-\underline{a}c)_i\,\pi=
\left\{ \begin {array}{ccc}
    0 & {\rm if} &  t_i< t_{i+1}\\
    t_i-t_{i+1}       & {\rm if} &  t_i\geq t_{i+1},
\end {array}
\right.
$$
for $1\leq i<n$.

\noindent
Summing both relations and using the definition of 
$\Upsilon$ we have 

$$
s_i=\left\{ \begin {array}{ccc}
((a-\underline{c}b)+(b-\underline{a}c)+(b-\underline{c}a))_i\,\pi   & {\rm if} &  i \neq n \\
(b-a]\,\pi            & {\rm if} &  i=n. 
\end {array}
\right.
$$

\noindent
Now, by Lemma \ref{lemma_dvr} and the previous relation we have  
\begin{eqnarray*}
((a-cb)+(b-ca)+(b-ca)+(ba))\,\pi & = & ((a-cb)+(b-ac)+(b-ca)+(b-a])\,\pi \\
 & = & \sum_{i=1}^n s_i,
\end{eqnarray*}
and since $\Upsilon$ is a code transform the result holds.
\end{proof}

\begin{The}
\label{third}
The following statistic (equivalent with the third one in \cite[Conjecture 8]{BabSteim}) 
is Mahonian
$$
(b-ca)+(b-ca)+(c-ab)+(ba).
$$
\end{The}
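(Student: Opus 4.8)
The plan is to handle this statistic by the same scheme as Theorem~\ref{second}, using the one code transform of Definition~\ref{six_fun} not yet employed, namely $\Psi$. (Indeed $\Delta$ gave the McMahon code, $\Gamma,\Theta,\Lambda$ settled Theorem~\ref{3stats}, and $\Upsilon$ settled Theorem~\ref{second}, so $\Psi$ is the natural remaining candidate.) Fix $\pi=\pi_1\pi_2\cdots\pi_n\in\s_n$, let $t=t_1t_2\cdots t_n$ be its Lehmer code, and put $s=s_1s_2\cdots s_n=\Psi(t)$. Since $\Psi$ is a code transform, $\pi\mapsto s$ is a permutation code, so it suffices to show that $\sum_{i=1}^n s_i$ equals the stated statistic.

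First I would establish, for $1\le i<n$ and using the dictionary $\pi_i<\pi_{i+1}\Leftrightarrow t_i\ge t_{i+1}$, the three pointed-pattern identities
$$
(b-\underline{c}a)_i\,\pi=
\begin{cases}
t_{i+1}-t_i-1 & \text{if } t_i<t_{i+1},\\
0 & \text{if } t_i\ge t_{i+1},
\end{cases}
\qquad
(b-\underline{a}c)_i\,\pi=
\begin{cases}
0 & \text{if } t_i<t_{i+1},\\
t_i-t_{i+1} & \text{if } t_i\ge t_{i+1},
\end{cases}
$$
together with $(c-\underline{a}b)_i\,\pi=t_{i+1}$ when $t_i\ge t_{i+1}$ and $0$ otherwise. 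Each follows from $t_i=\card\{j<i:\pi_j>\pi_i\}$; the only point needing care is the $-1$ in the descent case of $(b-\underline{c}a)_i$, which records that $\pi_i$ itself is one of the entries counted by $t_{i+1}$.

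Summing the three identities and comparing with the definition of $\Psi$ then yields, for $1\le i<n$,
$$
s_i=\bigl((b-\underline{c}a)+(b-\underline{a}c)+(c-\underline{a}b)\bigr)_i\,\pi,
$$
since the case $t_i<t_{i+1}$ gives $t_{i+1}-t_i-1$ and the case $t_i\ge t_{i+1}$ gives $(t_i-t_{i+1})+t_{i+1}=t_i$. For $i=n$ the transform gives $s_n=t_n=(b-a]\,\pi$. Because the pointed letter lies at position $i<n$ in each of the three patterns, summing over $i$ recovers every occurrence, so
$$
\sum_{i=1}^n s_i=\bigl((b-ca)+(b-ac)+(c-ab)+(b-a]\bigr)\,\pi.
$$

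Finally, Lemma~\ref{lemma_dvr} rewrites $((b-ac)+(b-a])\,\pi$ as $((b-ca)+(ba))\,\pi$, turning the right-hand side into $((b-ca)+(b-ca)+(c-ab)+(ba))\,\pi$, which is the claimed statistic; as $\Psi$ is a bijection on $S_n$, the statistic is Mahonian. I expect the main obstacle to be the careful bookkeeping in the second step, namely getting the piecewise values (and the stray $-1$) exactly right and confirming that the three pointed patterns sum to $\Psi(t)_i$ in both the ascent and the descent regimes.
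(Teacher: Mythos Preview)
Your proposal is correct and follows essentially the same approach as the paper: apply the code transform $\Psi$ to the Lehmer code, verify the same pointed-pattern identities (the paper merely combines your $(b-\underline{a}c)_i$ and $(c-\underline{a}b)_i$ into the single expression $((b-\underline{a}c)+(c-\underline{a}b))_i\,\pi$ equal to $t_i$ or $0$), and then invoke Lemma~\ref{lemma_dvr} exactly as you do.
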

\begin{proof}
Let $\pi\in\s_n$ with its Lehmer code $t=t_1t_2\cdots t_n$ and 
$s=s_1s_2\cdots s_n=\Psi(t)$. We have

$$
((b-\underline{a}c)+(c-\underline{a}b))_i\,\pi=
\left\{ \begin {array}{ccc}
     0 & {\rm if} &  t_i< t_{i+1}\\
    t_i        & {\rm if} &  t_i\geq t_{i+1},
\end {array}
\right.
$$
and
$$
(b-\underline{c}a)_i\,\pi=
\left\{ \begin {array}{ccc}
    t_{i+1}-t_i-1 & {\rm if} &  t_i< t_{i+1}\\
    0         & {\rm if} &  t_i\geq t_{i+1},
\end {array}
\right.
$$
for $1\leq i<n$, which together with
the definition of $\Psi$ gives

$$
s_i=\left\{ \begin {array}{ccc}
((b-\underline{a}c)+(b-\underline{c}a)+(c-\underline{a}b))_i\,\pi   & {\rm if} &  i \neq n \\
(b-a]\,\pi            & {\rm if} &  i=n. 
\end {array}
\right.
$$ 

\noindent
Now, by Lemma \ref{lemma_dvr} and the previous relation we have

\begin{eqnarray*}
((b-ca)+(b-ca)+(c-ab)+(ba))\,\pi & = & ((b-ac)+(b-ca)+(c-ab)+(b-a])\,\pi \\
& = & \sum_{i=1}^n s_i,
\end{eqnarray*}
and since $\Psi$ is a code transform the result holds.
\end{proof}

\begin{table}
\small
\begin{center}
\begin{tabular}{|c|c|c|}
\hline
 \multicolumn{2}{|c|}{}         & transform   \\
 \multicolumn{2}{|c|}{statistic}                  & of Lehmer code  \\
 \multicolumn{2}{|c|}{}                  & (if any)  \\
\hline\hline
$\inv$ 
 & \begin{tabular}{l} 
    $(b-a)$ \\ 
    $(b-ca)+(c-ab)+(c-ba)+(ba)$
   \end{tabular}

 & 
          \\ \hline
$\maj$ &
$(a-cb)+(b-ca)+(c-ba)+(ba)$
& 
\\ \hline

   \begin{tabular}{c} 
    $\maj$\\ 
   (Proposition \ref{Pro_Vaj_11})
   \end{tabular}
   
   &
$(a-cb)+(b-ac)+(c-ba)+(b-a]$ 
& 
 
  $\Delta$
              
  \\ \hline

 \begin{tabular}{c} 
   Theorem \ref{S_2}  \\ 
   (statistic $S_2$ in \\
   \cite[Conjecture 11]{BabSteim})
   \end{tabular}
& $(a-cb)+(b-ac)+(c-ba)+[b-a)$ & 
\\ \hline
 \begin{tabular}{c} 
   Theorem \ref{TS_4}  \\ 
   (statistic $S_4$ in \\
   \cite[Conjecture 11]{BabSteim})
   \end{tabular}
&  $(a-cb)+(b-ca)+(c-ba)+[b-a)$ & 
\\ \hline


  \begin{tabular}{c} 
   Theorem \ref{3stats}.1 \\ 
   ($\stat$ in \\
   \cite[Proposition 9]{BabSteim})
   \end{tabular}
 & 
   $(a-cb)+(b-ac)+(c-ba)+(ba)$                 & 
   
   $\Gamma$
              
   \\ \hline

\begin{tabular}{c} 
   Theorem \ref{3stats}.2 \\ 
   ($\stat'$ in \\
   \cite[Proposition 9]{BabSteim})
   \end{tabular}  &
$(b-ac)+(b-ca)+(c-ba)+(ba)$                 & 
  $\Theta$
            \\ \hline

 \begin{tabular}{c} 
   Theorem \ref{3stats}.3 \\ 
   ($\stat''$ in \\
   \cite[Proposition 9]{BabSteim})
   \end{tabular}   &
$(a-cb)+(c-ab)+(c-ba)+(ba)$                 & 
  $\Lambda$
        \\ \hline
\begin{tabular}{c} 
   Theorem \ref{second} \\ 
   (second statistic in\\
   \cite[Conjecture 8]{BabSteim})
   \end{tabular}  &
$(a-cb)+(b-ca)+(b-ca)+(ba)$                 & 
$\Upsilon$
            \\ \hline
 \begin{tabular}{c} 
   Theorem \ref{third} \\ 
   (third statistic in\\
   \cite[Conjecture 8]{BabSteim})
   \end{tabular} 
 &
$(b-ca)+(b-ca)+(c-ab)+(ba)$                 &  
$\Psi$
              \\ \hline

\end{tabular}
\end{center}
\caption{\label{table} Pattern statistics together with their 
Lehmer code transforms.}

\end{table}

\end{document}